\newtheorem{Theorem}{Theorem}
\newtheorem{Example}{Example}
\begin{document}


\title{A Recursion for the FiboNarayana and the Generalized Narayana Numbers}         
\author{Kristina Garrett\\
St. Olaf College\\
Northfield, MN 55057, USA\\ garrettk@stolaf.edu
\and
Kendra Killpatrick\\
Pepperdine University\\
Malibu, CA  90263, USA\\ 
kendra.killpatrick@pepperdine.edu} 


\date{\today}          
\maketitle

\begin{abstract}  The Lucas polynomials, $\{n\}$, are polynomials in $s$ and $t$ given by $\{ n \} = s \{ n-1 \} + t \{ n-2 \}$ for $n \geq 2$ with $ \{ 0 \} = 0$ and $\{ 1 \} = 1$.  The lucanomial coefficients, an analogue of the binomial coefficients, are given by
\[
\Bigl\{ \begin{array}{c} n\\k \end{array} \Bigr \} = \frac{ \{n\}! }{ \{k\}! \{n-k\}!}.
\]
When $s = t = 1$ then $\{ n \} = F_n$ and the lucanomial coefficient becomes the fibonomial coefficient
\[
\binom{n}{k}_F = \frac{F_n!}{F_k! F_{n-k}!}.
\]
The well-known Narayana numbers, $N_{n,k}$ satisfy the equation
\[
N_{n,k} = \frac{1}{n} \binom{n}{k} \binom{n}{k-1}.
\]
In 2018, Bennett, Carrillo, Machacek and Sagan defined the generalized Narayana numbers and conjectured that these numbers are positive integers for $n \geq 1$.  In this paper we define the FiboNarayana number $N_{n,k,F}$ and give a new recurrence relation for both the FiboNarayana numbers and the generalized Narayana numbers, proving the conjecture that these are positive integers for $n \geq 1$.    


.

\end{abstract}

{\bf {Keywords:}} Narayana number, Fibonomial coefficient, lucanomial coefficient, Catalan number
\vspace{.2in}

{\bf {AMS Classification:}}  05A10 (Primary), 05A15, 05A19

\section{Introduction}       

The well-known Fibonacci sequence is defined recursively by $F_n = F_{n-1} + F_{n-2}$ with initial conditions $F_0 = 0$ and $F_1 = 1$.  The $n$th Fibonacci number, $F_n$, counts the number of tilings of a strip of length $n-1$ with squares of length 1 and dominos of length 2.

The Lucas polynomials $\{ n \}$ are defined in variables $s$ and $t$ as $\{ 0 \} = 0$, $\{ 1 \} = 1$ and for $n \geq 2$ we have $\{ n \} = s \{ n-1 \} + t \{ n-2 \}$.  If $s$ and $t$ are set to be integers then the sequence of numbers is called a Lucas sequence.  When $s = t = 1$, the sequence is the Fibonacci sequence with $\{ n \} = F_n$.  The lucanomials, an analogue of the binomial coefficients, are then defined as 
\[
\Bigl\{ \begin{array}{c} n\\k \end{array} \Bigr\} = \frac{ \{n\} !}{ \{k\}! \{n-k\}!}
\]
where $\{ n \}! = \{n\} \{n-1\} \cdots \{2\} \{1\}$.  
When $s = t = 1$, $\bigl\{ \begin{array}{c} n\\k \end{array} \bigr\}$ is known as the fibonomial coefficient and written
\[
\binom{n}{k}_F = \frac{F_n !}{F_k! F_{n-k}!}
\]
where $F_n! = F_n F_{n-1} \cdots F_2 F_1$.

In 1985, Gessel and Viennot \cite{GeV} gave a combinatorial interpretation of the fibonomial coefficients in terms of nonintersecting lattice paths and in 2008, Benjamin and Plott \cite{BeP} gave a combinatorial interpretation in terms of tilings.  In 2010, Sagan and Savage \cite{SaS} gave two straightforward combinatorial interpretations for the lucanomials based on tilings of a partition $\lambda$ that fits inside a $k \times (n-k)$ rectangle and the tilings of the complement of that partition.

  The Catalan numbers are given by the explicit formula

\[
C_n = \frac{1}{n+1} \binom{2n}{n}.
\]

The FiboCatalan number $C_{n,F}$ is then defined as
\[
C_{n,F} = \frac{1}{F_{n+1}} \binom{2n}{n}_F.
\]

More generally, the generalized Catalan number given in terms of the lucanomials is
\[
C_{\{n\}} = \frac{1}{\{n+1\}} \Bigl\{ \begin{array}{c} 2n\\n \end{array}  
\Bigr\}.
\]

In 2018, Bennett et. al. \cite{Ben} gave a new combinatorial interpretation for the lucanomial coefficients $\Bigl\{ \begin{array}{c} n\\k \end{array} \Bigr\}$ and a combinatorial interpretation for the generalized Catalan numbers.

The well-known Narayana numbers are defined as
\[
N_{n,k} =  \frac{1}{n} \binom{n}{k} \binom{n}{k-1}
\]

and are known to sum to the Catalan numbers:
\[
C_n = \sum_{k=1}^n N_{n,k}.
\]

We define the FiboNarayana number as
\[
N_{n,k,F} = \frac{1}{F_n} \binom{n}{k}_F \binom{n}{k-1}_F
\]
and prove that these numbers are positive integers for $n \geq 1$.

Bennett et. al. defined the generalized Narayana numbers in terms of the lucanomial coefficients as
\[
N_{ \{n\, k\}} = \frac{1}{\{n\}} \Bigl\{ \begin{array}{c} n\\k \end{array} \Bigr\} \Bigl\{ \begin{array}{c} n\\k-1 \end{array} \Bigr\}
\]
and conjectured that these are positive integers for $n \geq 1$.  

In this paper, we prove that for $n \geq 2$ the FiboNarayana numbers satisfy the recurrence
\[
\frac{1}{F_n} \binom{n}{k}_F \binom{n}{k-1}_F = \binom{n-1}{k-1}_F^2 + \binom{n-1}{k}_F \binom{n-1}{k-2}_F.
\]
 
and that for $n \geq 2$ the generalized Narayana numbers satisfy the recurrence

\[
\frac{1}{n} \Bigl\{ \begin{array}{c} n\\k \end{array} \Bigr\} \Bigl\{\begin{array}{c} n\\k-1 \end{array} \Bigr\} = \Bigl\{ \begin{array}{c} n-1\\k-1 \end{array} \Bigr\}^2 + t \Bigl\{ \begin{array}{c} n-1\\k \end{array} \Big\} \Bigl\{ \begin{array}{c} n-1\\k-2 \end{array} \Bigr\}
\]
which answers the conjecture of Bennett et. al. and proves that both are positive integers for $n \geq 1$.  We will give both an algebraic proof and a combinatorial proof of these results.  The algebraic proofs appear first in section 2, while the combinatorial proof will depend on a combinatorial bijection that Killpatrick and Weaver \cite{KiW} gave for the Sagan and Savage tilings giving the lucanomials.  We give this bijection in section 3 and then the combinatorial proof of the recurrence relations in section 4.

\section{Background}

A partition of $n$ is a sequence of nonnegative integers $\lambda = (\lambda_1, \lambda_2, \dots, \lambda_k)$ with $\lambda_i \geq \lambda_{i+1}$ and $\sum_{i=1}^k \lambda_i = n$.  The Ferrers diagram of a partition $\lambda = (\lambda_1, \lambda_2, \dots, \lambda_k)$ is an upper left-justified array of rows with $\lambda_i$ squares in row $i$.  

In 2010, Sagan and Savage \cite{SaS} gave the following combinatorial interpretation of the lucanomial coefficients in terms of tilings of partitions that fit inside a certain rectangle.  We will make use of this interpretation to give the combinatorial proof that the FiboNarayana numbers and the generalized Narayana numbers are positive integers for $n \geq 1$. 

We say that a nonnegative integer partition $\lambda$ is contained in a $k \times (n-k)$ rectangle if $\lambda$ has $k$ parts and each part $\lambda_i \leq n-k$.  If $\lambda$ is contained in a $k \times (n-k)$ rectangle, then $\lambda$ determines a second partition $\lambda^*$ whose parts are given by the lengths of the columns in the complement of $\lambda$ within the $k \times (n-k)$ rectangle.  

A linear tiling of $\lambda$ is a tiling of each row $\lambda_i$ of $\lambda$ with length 1 squares and length 2 dominos.  Let $L_{\lambda}$ denote the set of all linear tilings of $\lambda$ and let $L_{\lambda}'$ denote the set of all tilings of $\lambda'$ in which all rows of length greater than 0 begin with a domino.  The weight of any tiling $T$ in $L_{\lambda} \times L_{\lambda^*}'$ is given by
\[
w(T) =  s^{(\text{number of squares in $L_{\lambda} \times L_{\lambda^*}'$})} t^{(\text{number of dominos in $L_{\lambda} \times L_{\lambda^*}'$})}.
\]

For example, the following is a partition $\lambda$ that is contained in a $6 \times 5$ rectangle and the corresponding tiling of $\lambda$ and $\lambda^*$ which has weight $s^{12} t^{9}$.

\begin{center}
\setlength{\unitlength}{1cm}
\begin{center}
\begin{picture}(15,3.5)(1,1)

\put(6,1){\line(0,1){3}}
\put(6,4){\line(1,0){2.5}}
\put(6,1){\line(1,0){2.5}}

\put(6,1.5){\line(1,0){2.5}}
\put(6,2){\line(1,0){2.5}}
\put(6,2.5){\line(1,0){2.5}}
\put(6,3){\line(1,0){2.5}}
\put(6,3.5){\line(1,0){2.5}}

\put(6.5,1){\line(0,1){3}}
\put(7,1){\line(0,1){3}}
\put(7.5,1){\line(0,1){3}}
\put(8,1){\line(0,1){3}}
\put(8.5,1){\line(0,1){3}}

\thicklines
\put(6,1){\line(1,0){1}}
\put(7,1){\line(0,1){1}}
\put(7,2){\line(1,0){.5}}
\put(7.5,2){\line(0,1){1}}
\put(7.5,3){\line(1,0){.5}}
\put(8,3){\line(0,1){1}}
\put(8,4){\line(1,0){.5}}
\thinlines

\put(6.25,1.25){\line(1,0){.5}}
\put(6.15, 1.65){$\bullet$}
\put(6.15, 2.15){$\bullet$}
\put(6.25, 2.75){\line(1,0){.5}}
\put(6.15, 3.15){$\bullet$}
\put(6.15, 3.65){$\bullet$}
\put(6.65, 1.65){$\bullet$}
\put(6.75, 2.25){\line(1,0){.5}}
\put(6.65, 3.15){$\bullet$}
\put(6.75, 3.75){\line(1,0){.5}}
\put(7.15, 2.65){$\bullet$}
\put(7.25, 3.25){\line(1,0){.5}}
\put(7.65, 3.65){$\bullet$}

\put(7.25, 1.25){\line(0,1){.5}}
\put(7.75, 1.25){\line(0,1){.5}}
\put(7.65, 2.15){$\bullet$}
\put(7.65, 2.65){$\bullet$}
\put(8.25, 1.25){\line(0,1){.5}}
\put(8.25, 2.25){\line(0,1){.5}}
\put(8.15, 3.15){$\bullet$}
\put(8.15, 3.65){$\bullet$}

\end{picture}
\end{center}
\end{center}

\begin{Theorem}(Sagan and Savage, 2010)  
\[
\Bigl\{ \begin{array}{c} n\\k \end{array} \Bigr\} = \sum_{\lambda \subseteq k \times (n-k)} \sum_{T \in L_{\lambda} \times L_{\lambda^*}'} w(T).
\]

\end{Theorem}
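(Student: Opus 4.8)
The plan is to prove the identity by induction on $n$, showing that the weighted tiling sum on the right-hand side obeys the same recurrence and boundary conditions as the lucanomial coefficient. First I would record the single-strip fact underlying everything: the weighted count of linear tilings of a strip of length $m$ by squares (weight $s$) and dominoes (weight $t$) is $\{m+1\}$, since this count satisfies the Lucas recurrence with the correct initial values. Consequently a row of $\lambda$ of length $r$ contributes a factor $\{r+1\}$, while a nonempty row of $\lambda^*$ of length $\ell$, being forced to begin with a domino, contributes $t\{\ell-1\}$ (and an empty row contributes $1$).

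Next I would establish the algebraic target. From the defining recurrence one obtains the Lucas addition formula $\{a+b\} = \{a+1\}\{b\} + t\{a\}\{b-1\}$ by a short induction on $b$. Substituting $a = n-k$ and $b = k$ and inserting the resulting expansion of $\{n\}$ into $\Bigl\{ \begin{array}{c} n\\k \end{array} \Bigr\} = \{n\}!/(\{k\}!\{n-k\}!)$ yields the Lucas analogue of Pascal's rule,
\[
\Bigl\{ \begin{array}{c} n\\k \end{array} \Bigr\} = \{n-k+1\} \Bigl\{ \begin{array}{c} n-1\\k-1 \end{array} \Bigr\} + t\{k-1\} \Bigl\{ \begin{array}{c} n-1\\k \end{array} \Bigr\}.
\]
The boundary cases $k=0$ and $k=n$ are immediate on both sides: the only partition in a $0 \times n$ or $n \times 0$ box is empty, so the tiling sum equals $1$, matching $\Bigl\{ \begin{array}{c} n\\0 \end{array} \Bigr\} = \Bigl\{ \begin{array}{c} n\\n \end{array} \Bigr\} = 1$; these serve as the base of the induction.

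For the inductive step with $0 < k < n$, I would split the partitions $\lambda \subseteq k \times (n-k)$ according to whether the longest row is full. If $\lambda_1 = n-k$, that row is full; its tiling contributes $\{n-k+1\}$, and deleting it leaves a partition in a $(k-1)\times(n-k)$ box whose complement-column lengths and forced-domino tilings are unchanged, giving a factor $\Bigl\{ \begin{array}{c} n-1\\k-1 \end{array} \Bigr\}$ by induction. If $\lambda_1 < n-k$, the last column of the box lies entirely in the complement, so the corresponding row of $\lambda^*$ has length $k$ and contributes $t\{k-1\}$; deleting this column leaves a partition in a $k \times (n-k-1)$ box, giving $\Bigl\{ \begin{array}{c} n-1\\k \end{array} \Bigr\}$. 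Summing the two cases reproduces the Pascal-type recurrence, completing the induction.

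The main obstacle is making the two decompositions genuinely into products: I must check that removing the full top row does not alter any complement-column length or its forced-domino tiling (it does not, since that row meets no complement cell), and dually that removing the full last complement column leaves all earlier columns and all rows of $\lambda$ intact. A secondary point requiring care is the weight bookkeeping for the forced domino, namely that a length-$\ell$ complement row beginning with a domino is weighted $t\{\ell-1\}$; this is exactly what produces the factor $t$ and the index shift in the second term, and it correctly forces weight $0$ (since $t\{0\}=0$) in the impossible length-one case, so no spurious configurations survive.
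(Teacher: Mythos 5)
Your proof is correct. Note that the paper itself does not prove this statement at all --- it is quoted as background with a citation to Sagan and Savage, so there is no in-paper argument to compare against. Your induction is essentially the original Sagan--Savage argument: the single-strip count $\{m+1\}$, the addition formula $\{a+b\}=\{a+1\}\{b\}+t\{a\}\{b-1\}$ (the same identity this paper reproves combinatorially in its Theorems 2 and 3), the resulting Pascal-type recurrence
\[
\Bigl\{ \begin{array}{c} n\\k \end{array} \Bigr\} = \{n-k+1\} \Bigl\{ \begin{array}{c} n-1\\k-1 \end{array} \Bigr\} + t\{k-1\} \Bigl\{ \begin{array}{c} n-1\\k \end{array} \Bigr\},
\]
and the matching split of the tiling sum according to whether $\lambda_1 = n-k$ (peel the full top row, which meets no complement cell, so $\lambda^*$ is unchanged) or $\lambda_1 < n-k$ (peel the full last column of the complement, whose forced initial domino supplies the factor $t\{k-1\}$, correctly vanishing when $k=1$). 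All the details you flag as needing care do in fact go through as you describe.
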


Let $SSP_{\binom{n}{k}}$ denote the set of Sagan and Savage tilings as described above that fit inside a $k$ x $(n-k)$ rectangle.  Then by setting $s=t=1$ in the theorem above, we have
\[
\vert SSP_{\binom{n}{k}} \vert = \binom{n}{k}_F = \frac{F_n!}{F_k! F_{(n-k)}!}.
\]

Since it is clear by definition that $\binom{n}{k}_F = \binom{n}{n-k}_F$, we have that 
\[
\vert SSP_{\binom{n}{k}} \vert = \vert SSP_{\binom{n}{n-k}} \vert,
\]
thus the number of Sagan and Savage tilings that fit inside a $k$ x $n-k$ rectangle is the same as the number of such tilings that fit inside an $n-k$ x $k$ rectangle.

Killpatrick and Weaver gave a bijective proof that
\[
\vert SSP_{\binom{n}{k}} \vert F_k! F_{(n-k)}! = F_n!
\]
and we will make use of this bijection in our proof that the FiboNarayana numbers and the generalized Narayana numbers are positive integers for $n \geq 1$.

\section{The Recurrences}

\begin{Theorem}
For $n \geq 2$, the FiboNarayana numbers satisfy the recurrence
\[
\frac{1}{F_n} \binom{n}{k}_F \binom{n}{k-1}_F = \binom{n-1}{k-1}_F^2 + \binom{n-1}{k}_F \binom{n-1}{k-2}_F.
\]
\end{Theorem}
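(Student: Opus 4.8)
The plan is to reduce the identity to a classical Fibonacci addition formula by clearing all of the factorials. First I would write each fibonomial coefficient in the statement in terms of Fibonacci factorials, using $\binom{n}{k}_F = F_n!/(F_k!\,F_{n-k}!)$. Applying $F_n! = F_n\,F_{n-1}!$ to the left-hand side produces a factor of $(F_{n-1}!)^2$ that also appears in every term on the right, so after dividing through by $(F_{n-1}!)^2$ the equation becomes a relation among reciprocals of factorials with no dependence on $n$ remaining except a single $F_n$ on the left.

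Next I would clear denominators by multiplying both sides by the common denominator $D = F_k!\,F_{n-k}!\,F_{k-1}!\,F_{n-k+1}!$ coming from the left-hand side. The left-hand side collapses to $F_n$. On the right, the square term $\binom{n-1}{k-1}_F^2$ contributes $D/\bigl((F_{k-1}!)^2(F_{n-k}!)^2\bigr)$, which simplifies via $F_k!/F_{k-1}! = F_k$ and $F_{n-k+1}!/F_{n-k}! = F_{n-k+1}$ to $F_k F_{n-k+1}$; the cross term $\binom{n-1}{k}_F\binom{n-1}{k-2}_F$ contributes $D/\bigl(F_k!\,F_{n-k-1}!\,F_{k-2}!\,F_{n-k+1}!\bigr)$, which simplifies similarly to $F_{n-k}F_{k-1}$.

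Thus the whole statement reduces to the single identity
\[
F_n = F_k\,F_{n-k+1} + F_{k-1}\,F_{n-k},
\]
which is exactly the Fibonacci addition formula $F_{a+b} = F_{a+1}F_b + F_a F_{b-1}$ taken at $a = n-k$ and $b = k$. The main work -- really the only nontrivial step -- is recognizing that after the routine factorial bookkeeping the two apparently unrelated terms on the right line up precisely with this classical convolution identity. I would therefore record the addition formula as a lemma (it follows by a one-line induction on $n$, or combinatorially from the tiling interpretation of $F_n$ mentioned in the introduction) so that the reduction above becomes a complete proof. Once the recurrence is established, the integrality and positivity of $N_{n,k,F}$ for $n \geq 1$ is immediate: the right-hand side is a sum of products of fibonomial coefficients, each of which is a positive integer, and the base case $n = 1$ is checked directly.
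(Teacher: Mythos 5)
Your proposal is correct and follows essentially the same route as the paper: clear the factorials, observe that everything reduces to the addition formula $F_n = F_k F_{n-k+1} + F_{k-1}F_{n-k}$, and prove that identity combinatorially (the paper uses exactly the tiling-of-a-strip argument, conditioning on whether a domino covers positions $k-1$ and $k$). The only cosmetic difference is direction --- the paper starts from the left-hand side and substitutes the addition formula for $F_n$, while you work from both sides toward the common identity --- but the computation is identical.
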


\begin{proof}
We will use the fact that a tiling of a strip of length $n-1$ either has a domino in positions $k-1$  to $k$ or not.  If it does not, then the first $k-1$ squares can be tiled in $F_k$ ways and the remaining $n-k$ squares can be tiled in $F_{n-k+1}$ ways.  If it does have a domino in positions $k-1$ to $k$, then the first $k-2$ squares can be tiled in $F_{k-1}$ was and the remaining $n-k-1$ squares following the domino can be tiled in $F_{n-k}$ ways.  Thus $F_n = F_k F_{n-k+1} + F_{k-1} F_{n-k}$.

\begin{align*}
N_{n,k,F} &= \frac{1}{F_n} \binom{n}{k}_F \binom{n}{k-1}_F \\
&= \frac{1}{F_n} \frac{F_n!}{F_k! F_{n-k}!} \frac{F_n!}{F_{k-1}! F_{n-k+1}!}\\
&= \frac{F_{n-1}! F_n F_{n-1}!}{F_k! F_{n-k}! F_{k-1}! F_{n-k+1}!}\\
&= \frac{(F_k F_{n-k+1} + F_{k-1} F_{n-k})F_{n-1}! F_{n-1}!}{F_k! F_{n-k}! F_{k-1}! F_{n-k+1}!}\\
&= \frac{F_k F_{n-k+1} F_{n-1}! F_{n-1}!}{F_k! F_{n-k}! F_{k-1}! F_{n-k+1}!} + \frac{F_{k-1} F_{n-k} F_{n-1}! F_{n-1}!}{F_k! F_{n-k}! F_{k-1}! F_{n-k+1}!}\\
&= \frac{F_{n-1}!}{F_{k-1}! F_{n-k}!} \frac{F_{n-1}!}{F_{k-1}! F_{n-k}!} + \frac{F_{n-1}!}{F_k! F_{n-k-1}!} \frac{F_{n-1}!}{F_{k-2}! F_{n-k+1}!}\\
&= \binom{n-1}{k-1}_F^2 + \binom{n-1}{k}_F \binom{n-1}{k-2}_F
\end{align*}

\end{proof}

A similar recurrence holds for the generalized Narayana numbers.
\begin{Theorem}
For $n \geq 2$, the generalized Narayana numbers satisfy the recurrence
\[
\frac{1}{n} \Bigl\{ \begin{array}{c} n\\k \end{array} \Bigr\} \Bigl\{\begin{array}{c} n\\k-1 \end{array} \Bigr\} = \Bigl\{ \begin{array}{c} n-1\\k-1 \end{array} \Bigr\}^2 + t \Bigl\{ \begin{array}{c} n-1\\k \end{array} \Big\} \Bigl\{ \begin{array}{c} n-1\\k-2 \end{array} \Bigr\}.
\]
\end{Theorem}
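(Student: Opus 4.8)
The plan is to mirror the algebraic argument used for the FiboNarayana recurrence (Theorem 2), replacing the Fibonacci addition identity with its Lucas-polynomial analogue. The single new ingredient I need is the weighted tiling identity
\[
\{n\} = \{k\}\{n-k+1\} + t\,\{k-1\}\{n-k\}.
\]
I would prove this exactly as in Theorem 2: recall that $\{m\}$ is the generating polynomial (with weight $s$ per square and $t$ per domino) for tilings of a strip of length $m-1$, and split such a tiling of length $n-1$ according to whether or not a domino covers positions $k-1$ and $k$. If no domino straddles that boundary, the tiling factors into a length-$(k-1)$ tiling and a length-$(n-k)$ tiling, contributing $\{k\}\{n-k+1\}$. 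If a domino does straddle the boundary, it carries weight $t$ and the remaining pieces have lengths $k-2$ and $n-k-1$, contributing $t\,\{k-1\}\{n-k\}$. The factor of $t$ here is precisely what produces the $t$ in the stated recurrence, and is the one genuine difference from the Fibonacci case.

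With this identity in hand, I would expand the left-hand side using $\{n\}! = \{n\}\{n-1\}!$ to cancel the leading factor, obtaining
\[
\frac{1}{\{n\}}\Bigl\{\begin{array}{c} n\\k \end{array}\Bigr\}\Bigl\{\begin{array}{c} n\\k-1 \end{array}\Bigr\} = \frac{\{n\}\,\{n-1\}!\,\{n-1\}!}{\{k\}!\,\{n-k\}!\,\{k-1\}!\,\{n-k+1\}!}.
\]
I would then substitute the addition identity for the single $\{n\}$ in the numerator and split into two fractions.

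In the first fraction the factor $\{k\}\{n-k+1\}$ cancels against $\{k\}!$ and $\{n-k+1\}!$, collapsing the denominator to $(\{k-1\}!\,\{n-k\}!)^2$ and yielding $\Bigl\{\begin{array}{c} n-1\\k-1 \end{array}\Bigr\}^2$. In the second fraction the factor $t\,\{k-1\}\{n-k\}$ cancels against $\{k-1\}!$ and $\{n-k\}!$, so that the remaining factorials regroup as $\frac{\{n-1\}!}{\{k\}!\,\{n-k-1\}!}\cdot\frac{\{n-1\}!}{\{k-2\}!\,\{n-k+1\}!}$, that is, $t\,\Bigl\{\begin{array}{c} n-1\\k \end{array}\Bigr\}\Bigl\{\begin{array}{c} n-1\\k-2 \end{array}\Bigr\}$, completing the proof.

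I expect no serious obstacle, since the computation is formally identical to the FiboNarayana proof and setting $s=t=1$ recovers it. The only points requiring care are bookkeeping: tracking the weight $t$ through the tiling split so that it lands on the correct summand, and checking that each factorial cancellation (for instance $\{k\}!/\{k\}=\{k-1\}!$) is legitimate for the relevant range of $k$, where the boundary cases $k=1$ and $k=n$ are absorbed by the convention $\{0\}=0$. I also read the left-hand factor as $\frac{1}{\{n\}}$, consistent with the definition of $N_{\{n,k\}}$ given in the introduction.
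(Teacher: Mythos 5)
Your proposal is correct and follows essentially the same route as the paper: it establishes the weighted tiling identity $\{n\} = \{k\}\{n-k+1\} + t\,\{k-1\}\{n-k\}$ by conditioning on whether a domino covers positions $k-1$ and $k$, and then performs the identical factorial cancellation. Your reading of the left-hand factor as $\frac{1}{\{n\}}$ (rather than the $\frac{1}{n}$ appearing in the theorem statement, evidently a typo) also matches what the paper's own proof actually computes.
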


\begin{proof}

As before, we will use the fact that a tiling of a strip of length $n-1$ either has a domino in positions $k-1$  to $k$ or not.  If it does not, then the first $k-1$ squares can be tiled in $\{ k \}$ ways and the remaining $n-k$ squares can be tiled in $\{n-k+1\}$ ways.  If it does have a domino in positions $k-1$ to $k$, then the first $k-2$ squares can be tiled in $\{k-1\}$ was and the remaining $n-k-1$ squares following the domino can be tiled in $\{n-k\}$ ways.  Thus $\{ n \} = \{k\} \{n-k+1\} + t\{k-1\} \{n-k\}$.

\begin{align*}
N_{\{n, k\}} &= \frac{1}{\{n\}} \Bigl\{ \begin{array}{c} n\\k \end{array} \Bigr\} \Bigl\{ \begin{array}{c} n\\k-1 \end{array} \Bigr\}\\
&= \frac{1}{\{n\}} \frac{\{n\}!}{\{k\}! \{n-k\}!} \frac{\{n\}!}{\{k-1\}! \{n-k+1\}!}\\
&= \frac{\{n-1\}! \{n\} \{n-1\}!}{\{k\}! \{n-k\}! \{k-1\}! \{n-k+1\}!}\\
&= \frac{(\{k\} \{n-k+1\} + t \{k-1\} \{n-k\})\{n-1\}! \{n-1\}!}{\{k\}! \{n-k\}! \{k-1\}! \{n-k+1\}!}\\
&= \frac{\{k\} \{n-k+1\} \{n-1\}! \{n-1\}!}{\{k\}! \{n-k\}! \{k-1\}! \{n-k+1\}!} + t \frac{\{k-1\} \{n-k\} \{n-1\}! \{n-1\}!}{\{k\}! \{n-k\}! \{k-1\}! \{n-k+1\}!}\\
&= \frac{\{n-1\}!}{\{k-1\}! \{n-k\}!} \frac{\{n-1\}!}{\{k-1\}! \{n-k\}!} + t \frac{\{n-1\}!}{\{k\}! \{n-k-1\}!} \frac{\{n-1\}!}{\{k-2\}! \{n-k+1\}!}\\
&= \Bigl\{ \begin{array}{c}  n-1\\k-1 \end{array} \Bigr\}^2 + t \Bigl\{ \begin{array}{c}  n-1\\k \end{array} \Bigr\}  \Bigl\{ \begin{array}{c} n-1\\k-2 \end{array} \Bigr\}
\end{align*}
\end{proof}

Since both the fibonomials and lucanomials are known to be positive integers for $n \geq 1$, the above theorems prove that the FiboNarayana numbers and the generalized Narayana numbers are both positive integers for $n \geq 1$.

\section{A Fibonomial Bijection}

We now describe the bijection of Killpatrick and Weaver \cite{KiW} between any tiling of the rows of an $(n-1)$ x $(n-1)$ stairstep shape and a set of three tilings:  a tiling of the rows of a $(k-1)$ x $(k-1)$ stairstep shape, a tiling of the rows of an $(n-k-1)$ x $(n-k-1)$ stairstep shape, and a Sagan and Savage tiling of an $(n-k)$ x $k$ rectangle.

To begin, we will say two elements in positions $i$ and $i+1$ of the same row are \emph{breakable} if they are not connected by a domino and \emph{unbreakable} if they are connected by a domino. In addition, let $A$ denote the tilings of the rows of an $(n-1)$ x $(n-1)$ stairstep board corresponding to $F_n!$ and $B$ be the set of tilings corresponding to $ \vert SSP_{\binom{n}{k}} \vert \cdot F_k!F_{n-k}! $, i.e. a tiling of the rows of a $(k-1)$ x $(k-1)$ stairstep shape, a tiling of the rows of an $(n-k-1)$ x $(n-k-1)$ stairstep shape, and a Sagan and Savage tiling of an $(n-k)$ x $k$ rectangle.

\begin{center} 
\includegraphics[height=2in, width=4in]{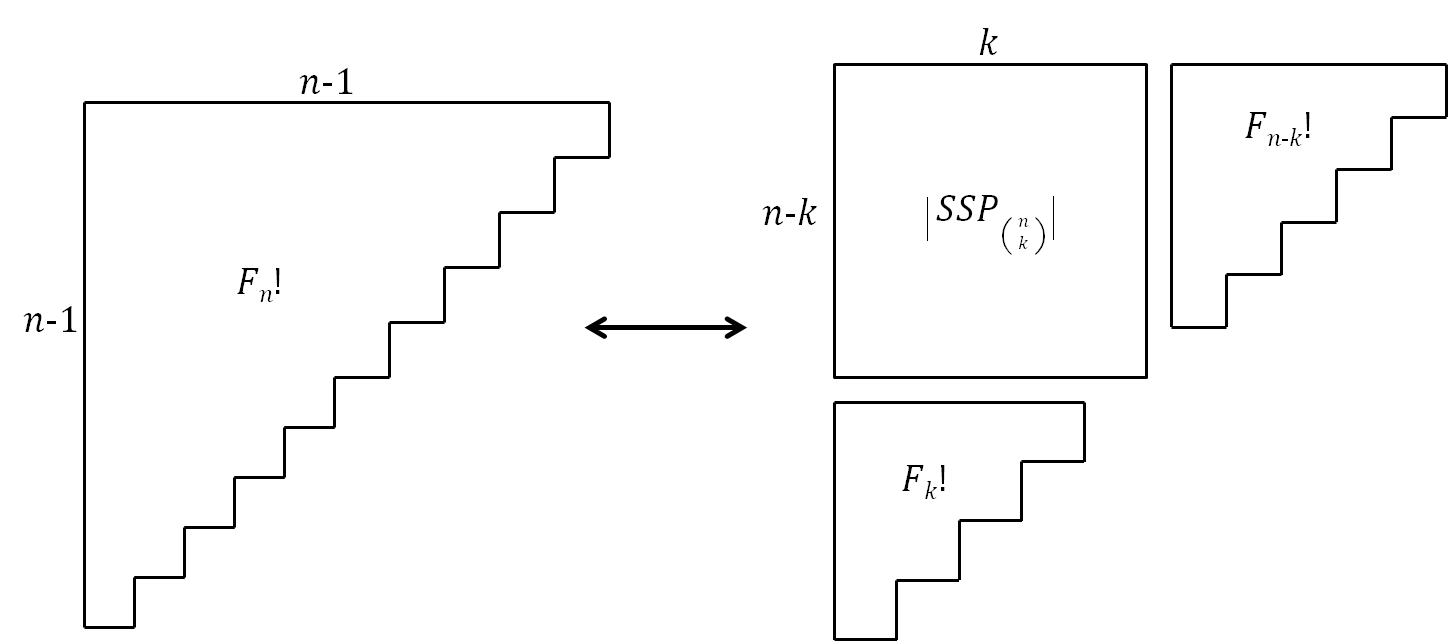} 
\end{center}


For the bijection, we will start with a tiling in $A$ and construct the three tilings in $B$.  Given a tiling in $A$, rows $n-k+1$ through $n-1$ give a tiling of a $(k-1)$ x $(k-1)$ stairstep shape so we will use this portion for the tiling in set $B$ counted by $F_{k}$.

Next we will construct both the path and associated tiling in $SSP_{\binom{n}{k}}$ and a tiling of the stairstep shape $(n-k-1)$ x $(n-k-1)$.  Start in the top row of $A$ and determine whether the elements in positions $k$ and $k+1$ are breakable. 
\begin{enumerate}
  \item If the elements are unbreakable, a leftward step is created in the upper right corner of the Sagan and Savage path in $B$; the segment consisting of the domino between elements $k$ and $k+1$ and all the elements in the top row of $A$ to the right of this domino are rotated 90 degrees counter-clockwise and placed under the leftward step of the path just created in $B$.  

\item If the elements in positions $k$ and $k+1$ are breakable, then create a downward step in the upper right corner of the Sagan and Savage path in $B$.  Place the elements in positions 1 through $k$ of the first row in $A$ to the left of the downward step in the path in $B$ just created, and the elements $k+1$ to $n-1$ in the same row of $B$, but to the right of the rectangle (this portion will be the first row of the tiling of the $(n-k-1)$ x $(n-k-1)$ stairstep shape).

\item If the elements in positions $k$ and $k+1$ in the first row of $A$ were breakable, consider the elements in positions $k$ and $k+1$ in the second row and repeat the process above.  If the elements in positions $k$ and $k+1$ in the first row of $A$ were unbreakable, examine elements $k-1$ and $k$ in the second row and repeat the process above. 
  
\item When reaching row $n-k+1$, if the elements to be compared are the elements in position $k$ and $k+1$, then there will be no element in position $k+1$ and the bijection is complete.  If the elements to be compared are elements $l$ and $l+1$ for $l < k$, then cycle around to the top row considering it as being "below" row $n-k$. Any rows or elements that have already been placed into the Sagan and Savage path tiling or the stairstep tiling are ignored.  Compare the elements in positions $l$ and $l+1$ in the first row that has elements that have not already been placed into one of the tilings and repeat the process above.  Any time row $n-k+1$ is reached, cycle back around to the top row of the tiling and continue the process.  

\item Continue until all of the elements of $A$ have been placed in $B$ and the corresponding stairstep tiling. The last step of the path made should then be connected to the bottom left corner of the rectangle with either leftward steps or downward steps, whichever is appropriate.
\end{enumerate}

\begin{Example}
Let $n=6$ and $k=3$.

\begin{center}
\includegraphics[height=1in, width=2in]{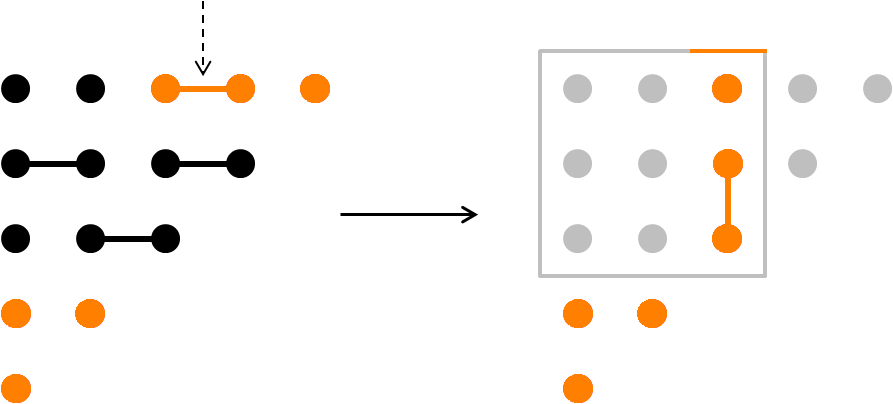}
\ \ \ \ \ \ \ \ \ \ 
\includegraphics[height=1in, width=2in]{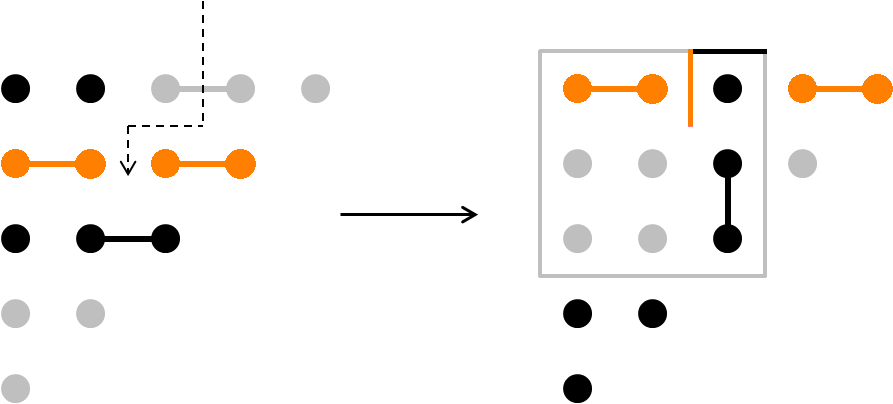}
\end{center}


\begin{center}
\includegraphics[height=1in, width=2in]{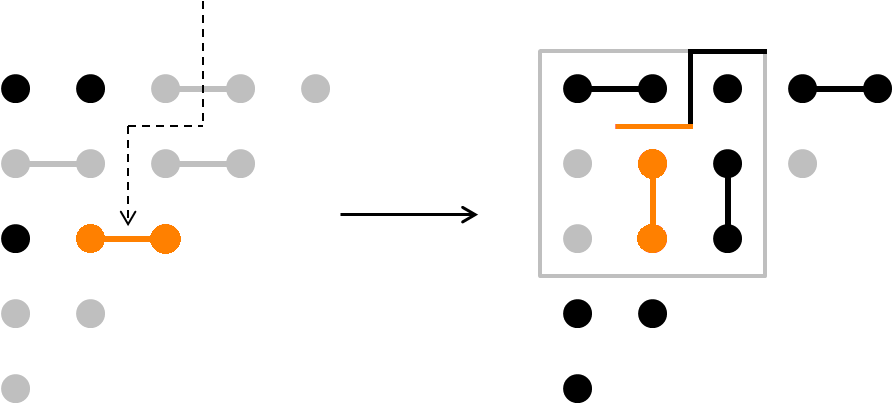}
\ \ \ \ \ \ \ \ \ \ 
\includegraphics[height=1in, width=2in]{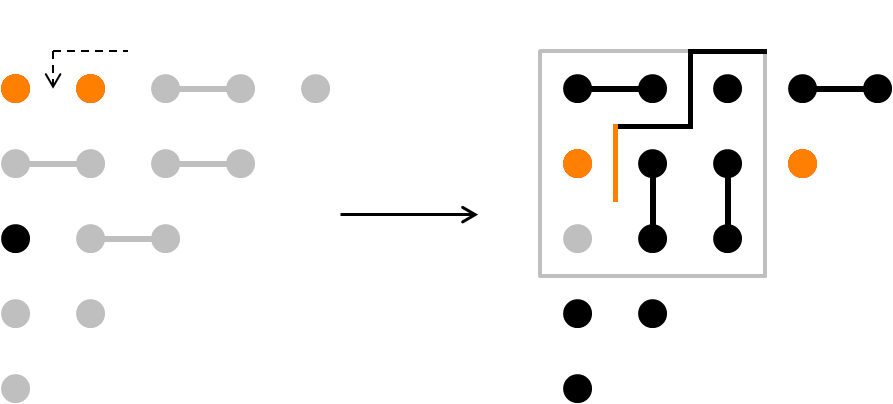}
\end{center}


\begin{center}
\includegraphics[height=1in, width=2in]{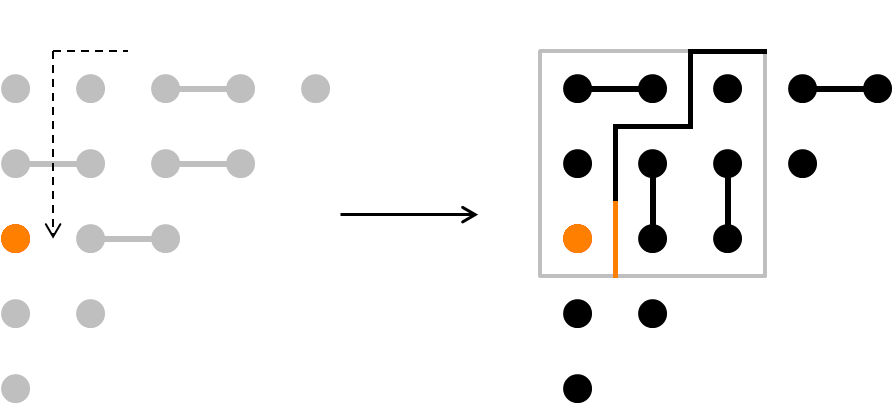}
\ \ \ \ \ \ \ \ \ \ 
\includegraphics[height=1in, width=2in]{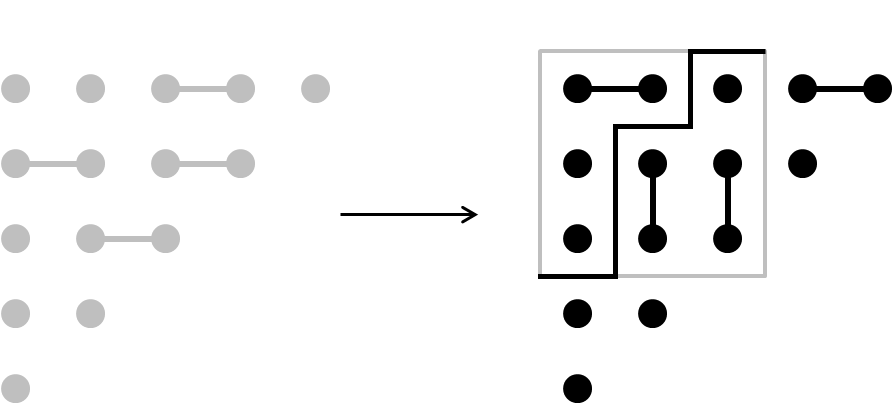}
\end{center}


\end{Example}

\section{A Combinatorial proof}

We will now give the combinatorial proof of the recurrence in Theorem 2 by showing why
\begin{align*}
F_n! F_{n-1}! &= F_k! F_{n-k}! F_{k-1}! F_{n-k+1}! \Bigl[\binom{n-1}{k-1}_F^2 + \binom{n-1}{k}_F \binom{n-1}{k-2}_F\Bigr]
\end{align*}

We will begin with a pair of tilings $(T_1, T_2)$ where $T_1$ is a tiling of a stairstep shape of size $n-1$ and $T_2$ is a tiling of a stairstep shape of size $n-2$, thus the set of all such possible pairs of tilings is counted by $F_n! F_{n-1}!$.

If there is no domino in positions $k-1$ to $k$ in $T_1$, then the number of possible tilings of the first row is $F_k F_{n-k}$.  Now use the Killpatrick and Weaver bijection to turn the tiling of the remaining stairstep shape of size $n-2$ into a Sagan and Savage tiling of size $n-k$ x $k-1$, a tiling of a stairstep shape of size $k-2$ and a tiling of a stairstep shape of size $n-k-1$.  The number of such triples is counted by $\binom{n-1}{k-1}_F F_{k-1}! F_{n-k}!$.  We will use the Killpatrick and Weaver bijection a second time to turn the tiling of $T_2$ into a Sagan and Savage tiling of size $n-k$ x $k-1$, a tiling of a stairstep shape of size $k-2$ and a tiling of a stairstep shape of size $n-k-1$.  The number of such triples is again counted by $\binom{n-1}{k-1}_F F_{k-1}! F_{n-k}!$.  Altogether we have 
\begin{align*}
F_k F_{n-k} \binom{n-1}{k-1}_F F_{k-1}! F_{n-k}! \binom{n-1}{k-1}_F F_{k-1}! F_{n-k}! \\
= F_k! F_{n-k}! F_{k-1}! F_{n-k+1}! \binom{n-1}{k-1}_F^2.
\end{align*}

\begin{center}
\includegraphics[height=2.5in, width=5in]{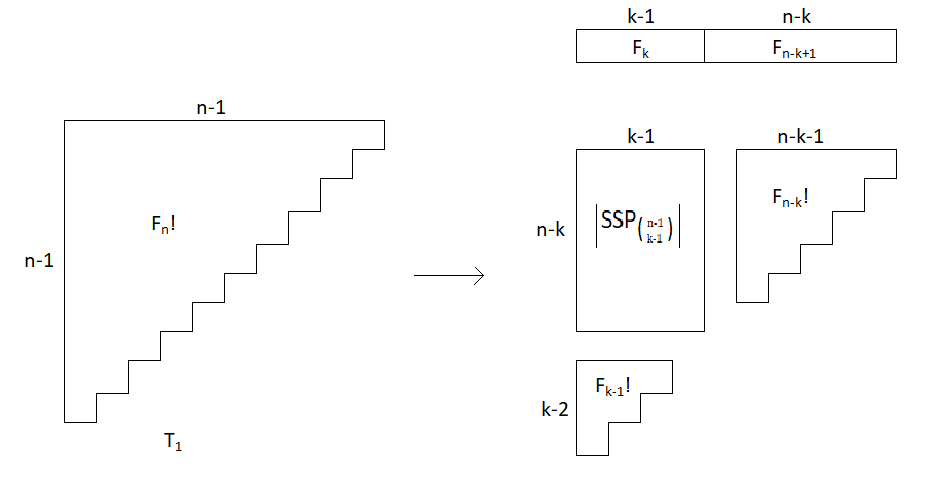}
\end{center}

\begin{center}
\includegraphics[height=2.5in, width=5in]{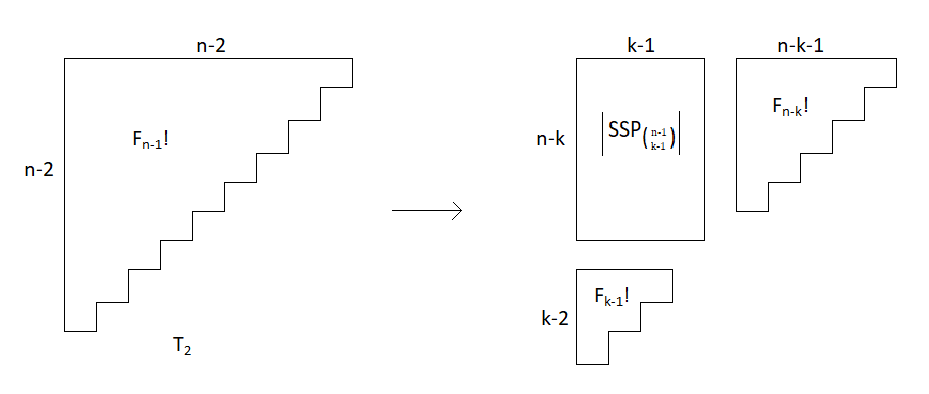} 
\end{center}

If there is a domino in positions $k-1$ to $k$, then the number of possible tilings of the first row is $F_{k-1} F_{n-k}$.  We will use the Killpatrick and Weaver bijection to turn the tiling of the remaining stairstep shape of size $n-2$ into a Sagan and Savage tiling of size $n-k+1$ x $k-2$, a tiling of a stairstep shape of size $k-3$ and a tiling of a stairstep shape of size $n-k$.  This triple is counted by $\binom{n-1}{k-2}_F F_{k-2}! F_{n-k+1}!$.  We will use the Killpatrick and Weaver a bijection a second time to turn the tiling of $T_2$ into a Sagan and Savage tiling of size $n-k-1$ x $k$, a tiling of a stairstep shape of size $k-1$ and a tiling of a stairstep shape of size $n-k-2$, which is counted by $\binom{n-1}{k}_F F_{k}! F_{n-k-1}!$.  Altogether we have 
\begin{align*}
F_{k-1} F_{n-k} \binom{n-1}{k-2}_F F_{k-2}! F_{n-k+1}! \binom{n-1}{k}_F F_{k}! F_{n-k-1}! \\ =
F_k! F_{n-k}! F_{k-1}! F_{n-k+1}! \binom{n-1}{k-2}_F \binom{n-1}{k}_F.
\end{align*}

\begin{center}
\includegraphics[height=2.5in, width=5in]{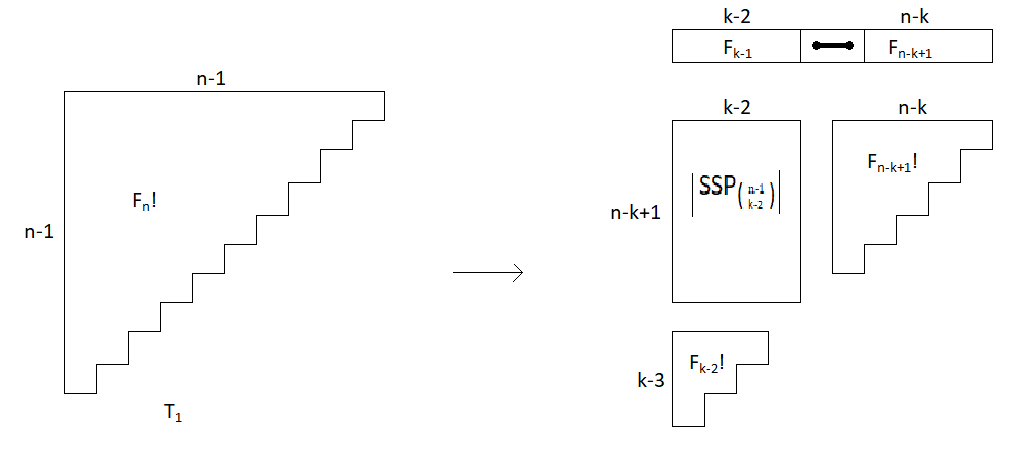}
\end{center}

\begin{center}
\includegraphics[height=2.5in, width=5in]{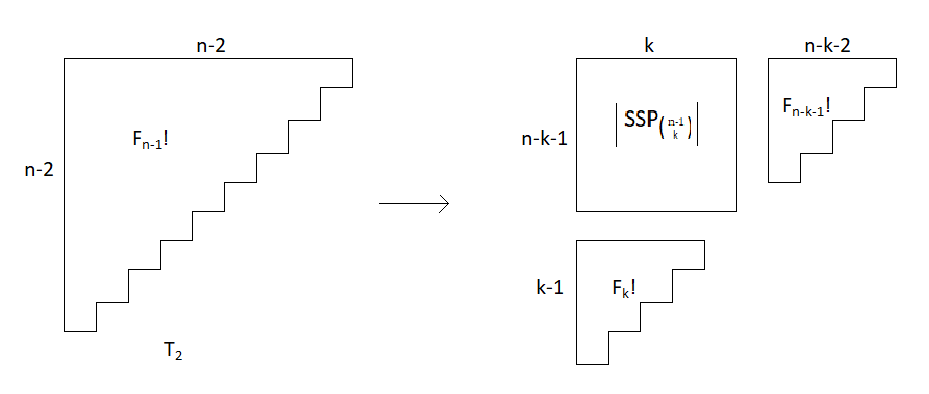} 
\end{center}


\begin{thebibliography}{00}

\bibitem{Ben}  Bennett, C., Carrillo, J., Machacek, J. and Sagan, B., Combinatorial interpretations of Lucas analogues of binomial coefficients and Catalan numbers, arXiv:1809.09036 [math.CO] , 2018.

\bibitem{BeP} Benjamin, A. and Plott, S., A Combinatorial approach to Fibonomial coefficients, {\it{Fibonacci Quarterly}}, 46/47(1): 7-9, 2008/2009.

\bibitem{BeQ} Benjamin, A. and Quinn, J., {\it {Proofs That Really Count}}, Vol. 27 of {\it {The Dolciani Mathematical Expositions}}, Mathematical Association of America, Washington, DC, 2003.

\bibitem{GeV}  Gessel, I. and Viennot, G., Binomial determinants, paths, and hook length formulae. {\it {Advances in Math}}, {\bf{58}} (1985), 300-321.

\bibitem{KiW}  Killpatrick, K. and Weaver, J., A bijective proof for the Fibonomial coefficients, in preparation.

\bibitem{SaS}  Sagan, B. and Savage, C., Combinatorial interpretations of binomial coefficient analogues related to Lucas sequences. {\it {Integers, The Electronic Journal of Combinatorial Number Theory}}, 10:697-703.

\end{thebibliography}
\end{document}